\documentclass[12pt]{amsart}
\usepackage{amsmath, amssymb}
\newtheorem{theorem}{\sc Theorem}[section]
\newtheorem{lemma}[theorem]{\sc Lemma}
\newtheorem{proposition}[theorem]{\sc Proposition}

\usepackage{abstract} % Permite maior personalização

\title[Involutory automorphisms]{Involutory automorphisms of finite groups of odd order}

\author[P. Shumyatsky]{Pavel Shumyatsky} 
\address{Pavel Shumyatsky: Department of Mathematics, University of Brasilia, Brasilia, DF, Brazil}
\email{pavel@unb.br}

\author[G. C. de Souza]{Gabriella Cristina de Souza} 
\address{Gabriella Cristina de Souza: Department of Mathematics, University of Brasilia, Brasilia, DF, Brazil}
\email{gabriellacs.math@gmail.com}

\thanks{The work of the first author was supported by FAPDF and CNPq}

\keywords{Finite groups, automorphisms, centralizers}

\subjclass[2020]{ 20D25, 20D45}

\begin{document}

\maketitle

\begin{abstract} \noindent For a finite group $G$ we write $\gamma_\infty(G)$ to denote the nilpotent residual of $G$, that is, the intersection of all terms of the lower central series. The following theorem is proved: 

\noindent Let $G$ be a finite group of odd order admitting an involutory automorphism $\varphi$ such that $G=[G,\varphi]$ and suppose that $\gamma_\infty(C_G(\varphi))$ has order $m$. Then the order of $\gamma_\infty(G')$ is bounded by a function depending only on $m$.

\noindent This complements several earlier results on groups of odd order admitting involutory automorphisms.
\end{abstract}

%%%%%%%%%%%%%%%%%%%%%%%%%%%%%%%%%%%%%%%%%%%%%%%%%%%%%%%%%%%%%%%%%%%
\section{Introduction}

Let $G$ be a finite group admitting an involutory automorphism $\varphi$, that is, an automorphism of order two. It is well known that properties of the fixed-point subgroup $C_G(\varphi)$ exert a strong influence on the structure of $G$.

A classical and elementary example of this phenomenon is the fact that if $C_G(\varphi)=1$, then $G$ is abelian. Hartley and Meixner \cite{hartleyandmeixner} showed that if $C_{G}(\varphi)$ has order $m$, then $G$ possesses a nilpotent subgroup of class at most two  of $m$-bounded index. Throughout this paper, the term ``$m$-bounded'' means ``bounded above by a function depending only on $m$.

It was observed some time ago that if $\varphi$ is an involutory automorphism of a group $G$ of odd order such that $G=[G,\varphi]$, then the structure of the commutator subgroup $G'$ is similar to that of $C_G(\varphi)$.  Asar \cite{asar1981} proved that, if $C_G(\varphi)$ is nilpotent, then $[G,\varphi]'$ is also nilpotent. The first-named author showed that if $[C_G(\varphi):F(C_G(\varphi))]=m$, then the index $[G':F(G')]$ is $m$-bounded \cite{shumyatsky2007}. Here and throughout $F(K)$ stands for the Fitting subgroup of a group $K$. Several other results in this direction were established in \cite{shumyatsky1993,ijac96,arch98,mona05}.
    
Given a finite group $G$, we write $\gamma_\infty(G)$ to denote the nilpotent residual of $G$, that is, the intersection of all terms of the lower central series. The goal of this article is to prove the following result:

\begin{theorem} \label{main}
        Let $G$ be a finite group of odd order admitting an involutory automorphism $\varphi$ such that $G=[G,\varphi]$. If $\gamma_{\infty}(C_G(\varphi))$ has order $m$, then $\gamma_{\infty}(G')$ has $m$-bounded order.
\end{theorem}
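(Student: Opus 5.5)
We reduce the statement to bounding the number of ``non-nilpotent pieces'' of $G'$ and then bound each piece, following the strategy of \cite{shumyatsky2007}. Throughout we use the standard facts for a group $G$ of odd order with an involutory automorphism $\varphi$. Every $\varphi$-invariant section $\bar G=G/N$ satisfies $C_{\bar G}(\varphi)=C_G(\varphi)N/N$. Moreover $G=[G,\varphi]C_G(\varphi)$, and $[G,\varphi]$ is generated by the set $I=\{x^{-1}x^\varphi\}$ of elements inverted by $\varphi$. Write $C=C_G(\varphi)$.

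\emph{Step 1: reduction modulo $\gamma_\infty(C)$.} Put $M$ for the normal closure of $\gamma_\infty(C)$ in $G$. In $G/M$ the centralizer of $\varphi$ is nilpotent. By Asar's theorem \cite{asar1981} applied to $G/M=[G/M,\varphi]$, the derived group $G'M/M$ is nilpotent, so $\gamma_\infty(G')\le M$. The task is therefore to show that $M\cap\gamma_\infty(G')$ has $m$-bounded order. Equivalently, we may pass to a minimal counterexample and argue by induction on $|G|$.

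\emph{Step 2: coprime action and $p$-sections.} Since $|G|$ is odd, $\gamma_\infty(G')=\prod_p[P,H]$ over suitable Sylow data, where $P$ is a $\varphi$-invariant Sylow subgroup of $\gamma_\infty(G')$ and $H$ a $p'$-subgroup. The plan is to show the following:
\begin{enumerate}
\item[(a)] only primes dividing $m$ can occur;
\item[(b)] for each such $p$, the $p$-part of $\gamma_\infty(G')$ has $m$-bounded order.
\end{enumerate}

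For (a), take a $\varphi$-invariant section $V\rtimes Q$, where $V$ is an elementary abelian $p$-group, $Q$ is a $q$-group with $[V,Q]=V\neq1$, and $Q\le G'$. Using $G=[G,\varphi]$, we arrange $Q$ to be generated by commutators of elements of $I$. The key trick is the classical identity: for $a,b\in I$ the product $ab$ is ``almost'' centralized modulo the relevant layer, and $Q=[Q,\varphi]'$-type elements can be realized inside $C$. This is the content of Asar's argument, and it forces a non-trivial commutator $[V_0,Q_0]\ne1$ inside $C$. Hence $[V_0,Q_0]\le\gamma_\infty(C)$, so $p$ divides $m$.

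For (b), we bound the rank and then the exponent of $\gamma_\infty(G')_p$. The minimal number of generators should be bounded by a Thompson/Hartley--Meixner style count: $C_V(\varphi)$ meets $\gamma_\infty(C)$ in a subgroup of order at most $m$, and $V=C_V(\varphi)\times[V,\varphi]$. The Hall--Higman type argument of \cite{shumyatsky2007} then bounds $|[V,Q]|$ in terms of $|[V,Q]\cap C|\le m$. The standard inequality $|[V,\varphi]|\le |C_{[V,Q]}(\varphi)|^{k}$, with $k$ bounded in terms of $|Q|$ and $|Q|$ itself $m$-bounded, would give this.

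\emph{Main obstacle.} The hard step is (b): passing from bounds on individual chief factors to a bound on the whole $\gamma_\infty(G')$. The number of chief factors of $\gamma_\infty(G')$ is not a priori bounded, and $|Q|$ need not be $m$-bounded. My first attempt is to show that $\gamma_\infty(G')$ is generated by boundedly many subgroups $[P,x]$ with $x$ running over $m$-boundedly many elements arising from $\gamma_\infty(C)$. I would then use the fact that $\gamma_\infty(G')=[\gamma_\infty(G'),G']$ together with Step 1 to confine everything to the normal closure of a subgroup of order $m$. Finally, Lemma-type results of the form ``if $x$ normalizes a $p$-group $P$ and $[P,x]\le$ bounded set, then $|[P,x]|$ is bounded'' should close the argument.
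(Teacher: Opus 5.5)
There is a genuine gap: the proposal never actually bounds anything. Step~1 is correct, since Asar's theorem applied to $G/\langle\gamma_\infty(C)^G\rangle$ gives $\gamma_\infty(G')\le\langle\gamma_\infty(C)^G\rangle$. But the normal closure of a subgroup of order $m$ can be arbitrarily large, so this reduces nothing. Claim (a) is true, but what you give for it (``the classical identity'', ``$[Q,\varphi]'$-type elements can be realized inside $C$'') is not an argument. A proof needs something like Proposition~\ref{93} and Lemma~\ref{lema7}. For $p\nmid m$, pass to $G/O_{p'}(G)$. There $O_p(G)\cap G'$ meets $\gamma_\infty(G_\varphi)$ trivially, hence lies in $Z_\infty(G')$. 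Baer's criterion together with $C_{G'}(O_p(G'))\le O_p(G')$ then forces $G'$ to be a $p$-group there, so $\gamma_\infty(G')\le O_{p'}(G)$.

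In (b) you assume $|Q|$ is $m$-bounded and quote an inequality $|[V,\varphi]|\le|C_{[V,Q]}(\varphi)|^k$. That inequality is not standard and is false without extra hypotheses: let $\varphi$ centralize $Q$ and invert $V$. You then concede that $|Q|$ need not be bounded and that $\gamma_\infty(G')$ may have unboundedly many chief factors, and leave this as a ``first attempt''. That obstacle is the whole content of the theorem: bounds on individual chief factors cannot simply be multiplied together.

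The paper gets past it with two ideas absent from your sketch. First, it inducts on $m$ rather than on $|G|$. A minimal $\varphi$-invariant normal subgroup $M$ with $M_\varphi\cap\gamma_\infty(G_\varphi)\ne1$ gives a quotient with strictly smaller $m$, so such layers can be peeled off only boundedly often. A $\varphi$-invariant normal $N$ with $N_\varphi\cap\gamma_\infty(G_\varphi)=1$ satisfies $N\cap G'\le Z_\infty(G')$ (Lemma~\ref{lema7}). Lemma~\ref{lema8} (bounded modulo the hypercentre implies bounded, via Baer's criterion and Schur's theorem) then absorbs all such layers at once.

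Second, to bound a single layer $M$ the paper does not count ranks.
\begin{itemize}
\item It reduces to $M=C_G(M)$ via $M\rtimes G/C_G(M)$ (Lemma~\ref{pre.lema7}).
\item The result of \cite{shumyatsky2007} bounds $[G':F(G')]$. This yields a nilpotent $\varphi$-invariant $p'$-subgroup $H$ with $[G:MH]$ and $|H_\varphi|$ both bounded.
\item It takes a $p'$-element $a\in G_\varphi$ with $[M_\varphi,a]\ne1$ (found via Proposition~\ref{93}) and sets $T=[M,a]$. Then $|T_\varphi|\le m$.
\item Hartley--Meixner applied to $TH$ shows that a nontrivial $x\in T_\varphi$ has $m$-bounded conjugacy class.
\item Finally $|M|=|\langle x^G\rangle|\le m^{|x^G|}$, since $p\le m$.
\end{itemize}
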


We end this short introduction with the comment that also in the case when the order of $\varphi$ is not necessarily two the impact of $C_G(\varphi)$ on the structure of $G$ can be strong (see for example \cite{khukhro}), though not as strong as in the case of involutory automorphisms.

\section{Preliminaries} 

 As usual, $\pi(G)$ denotes the set of prime divisors of the order of $G$. We write $O_\pi(G)$ for the maximal normal $\pi$-subgroup.
 
Given an automorphism $\varphi$ of a group $G$, we denote by $G_{\varphi}$ the centralizer of $\varphi$ in $G$ and by $G_{-\varphi}$ the set $\{g^{-1}g^{\varphi}\mid g\in G\}$. Observe that $[G,\varphi]$ is precisely the subgroup generated by $G_{-\varphi}$.
    
    The following lemma collects some well-known results on involutory automorphisms. In the sequel it will often be used without explicit references. 
    
    \begin{lemma} \label{aut.inv}
        Let $G$ be a finite group of odd order admitting an involutory automorphism $\varphi$. The following properties hold:
        \begin{itemize}
            \item [(i)] $G = G_\varphi G_{-\varphi} = G_{-\varphi} G_\varphi$, and every element $x \in G$ can be written uniquely in the form $x = gh$, where $g \in G_\varphi$ and $h \in G_{-\varphi}$.
        
            \item [(ii)] If $N$ is a normal $\varphi$-invariant subgroup of $G$, we have $(G/N)_{-\varphi} = \left \{ xN \mid x \in G_{-\varphi} \right \}$.
        
            \item [(iii)] If $N$ is a normal $\varphi$-invariant subgroup of $G$ such that $N=N_{\varphi}$ or $N=N_{-\varphi}$, then $\left [ G, \varphi \right ]$ centralizes $N$.
        
            \item [(iv)] If $G=[G,\varphi]$, then the normal closure of $G_\varphi$ is exactly $G'$.  
        
            \item [(v)] $G_\varphi$ normalizes the set $G_{-\varphi}$.
        \end{itemize}
    \end{lemma}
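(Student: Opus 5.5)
The plan is to prove the five items of Lemma~\ref{aut.inv} in order, using only that $|G|$ is odd. The tool used throughout is this: if $x\in G$ and $x^2$ commutes with some element $y$, then $x\in\langle x^2\rangle$ commutes with $y$. The first step is to identify $G_{-\varphi}$ with the set $I=\{x\in G\mid x^\varphi=x^{-1}\}$. Clearly $(g^{-1}g^\varphi)^\varphi=(g^\varphi)^{-1}g=(g^{-1}g^\varphi)^{-1}$, so $G_{-\varphi}\subseteq I$. Conversely, for $x\in I$ choose $y\in\langle x\rangle$ with $y^{-2}=x$; this is possible since $x$ has odd order. Because $\varphi$ inverts $\langle x\rangle$, we get $y^{-1}y^\varphi=y^{-2}=x$, so $I\subseteq G_{-\varphi}$.

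For (i), the map $g\mapsto g^{-1}g^\varphi$ is constant exactly on the cosets $G_\varphi g$, since $g^{-1}g^\varphi=k^{-1}k^\varphi$ if and only if $kg^{-1}\in G_\varphi$. Hence $|G_{-\varphi}|=[G:G_\varphi]$. Next I show that each coset $G_\varphi x$ meets $I$ in at most one element. Suppose $h\in I$ and $gh\in I$ with $g\in G_\varphi$. Applying $\varphi$ gives $gh^{-1}=h^{-1}g^{-1}$, that is, $hgh^{-1}=g^{-1}$. Applying $\varphi$ once more gives $h^{-1}gh=g^{-1}$. Therefore $h^2$ centralizes $g$, so $h$ does, so $g=g^{-1}$ and $g=1$. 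Counting now gives $G=G_\varphi G_{-\varphi}$ with uniqueness of the factorization. Taking inverses, and using that $I$ is closed under inversion, gives $G=G_{-\varphi}G_\varphi$.

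Item (ii) is immediate from the definition, since $(gN)^{-1}(gN)^\varphi=g^{-1}g^\varphi N$. Item (v) is also immediate: for $g\in G_\varphi$ and $h\in I$ we have $(h^g)^\varphi=(h^{-1})^g=(h^g)^{-1}$, so $h^g\in I=G_{-\varphi}$. For (iii) it suffices to show that every $h\in G_{-\varphi}$ centralizes $N$, since these elements generate $[G,\varphi]$. Take $n\in N$.
\begin{itemize}
\item If $N=N_\varphi$, then $h^{-1}nh\in N$ is fixed by $\varphi$, and applying $\varphi$ gives $h n h^{-1}=h^{-1}nh$.
\item If $N=N_{-\varphi}$, then $h^{-1}nh$ is inverted by $\varphi$, and applying $\varphi$ gives $hn^{-1}h^{-1}=h^{-1}n^{-1}h$, which is the same relation.
\end{itemize}
In both cases $h^2$ commutes with $n$, hence so does $h$.

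For (iv), let $M$ be the normal closure of $G_\varphi$; it is $\varphi$-invariant. By (i) applied to $G/M$, every element of $(G/M)_\varphi$ lifts to an element of $G_\varphi$, because the fixed part of $xM$ comes from the fixed part of $x$ via uniqueness. So $(G/M)_\varphi=1$. Then (i) gives that every element of $G/M$ is inverted by $\varphi$, so inversion is an automorphism and $G/M$ is abelian; thus $G'\le M$. Conversely, in $A=G/G'$ the map $\varphi$ inverts the images of $G_{-\varphi}$, which generate $A$ since $G=[G,\varphi]$. Hence $\varphi$ inverts all of $A$, and odd order forces $A_\varphi=1$. So $G_\varphi\le G'$, and $M=G'$.

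The only mildly delicate point is the uniqueness in (i), and the same $h^2$-trick resolves it. That trick also underlies (iii) and the lifting of fixed points used in (iv).
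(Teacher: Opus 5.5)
Your proof is correct and complete. You identify $G_{-\varphi}$ with the set of elements inverted by $\varphi$, show that the fibres of $g\mapsto g^{-1}g^\varphi$ are the cosets $G_\varphi g$, and use the fact that when $|G|$ is odd an element $h$ commutes with whatever $h^2$ commutes with. These three points settle all five items, including the lifting $(G/M)_\varphi=G_\varphi M/M$ in (iv), which is also Lemma~\ref{aut.cop}(i). The paper gives no proof of this lemma and presents it as a collection of well-known facts; your argument is the standard self-contained verification of them.
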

    
Some parts of Lemma \ref{aut.inv} are well known even without assuming that the automorphism has order two. In particular, we have the following lemma (see for example \cite[Theorem 6.2.2]{gorenstein}).
    
    \begin{lemma} \label{aut.cop}
        Let $G$ be a finite group admitting an automorphism $\varphi$ such that $(|\varphi|, |G|)=1$. Then
        \begin{itemize}
            \item [(i)] If $N$ is a normal $\varphi$-invariant subgroup of $G$, then $(G/N)_{\varphi} = G_{\varphi}N/N$.
            
            \item [(ii)] $G = G_{\varphi}[G, \varphi]$.
        
            \item [(iii)] $[G, \varphi] = [G, \varphi, \varphi]$.
        
            \item [(iv)]  For each $p\in \pi(G)$, there is a  $\varphi$-invariant Sylow $p$-subgroup of $G$.
        \end{itemize}
    \end{lemma}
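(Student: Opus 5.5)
All four parts will be obtained by working in the semidirect product $S=G\langle\varphi\rangle$ and applying the Schur--Zassenhaus theorem. Its hypotheses hold because $|\varphi|$ is coprime to $|G|$. Its conjugacy part applies because in each use either the normal subgroup or the quotient is solvable: the quotient is always isomorphic to a subgroup of the cyclic group $\langle\varphi\rangle$.

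For (i), clearly $G_\varphi N/N\le (G/N)_\varphi$. Conversely, suppose $gN$ is fixed by $\varphi$. Then
$$\varphi^g=g^{-1}\varphi g=g^{-1}g^{\varphi^{-1}}\varphi\in N\varphi,$$
since $g^{\varphi^{-1}}\in gN$. Hence $\langle\varphi\rangle$ and $\langle\varphi^g\rangle$ are both complements to $N$ in $N\langle\varphi\rangle$. By Schur--Zassenhaus there is $n\in N$ with $\langle\varphi\rangle^{gn}=\langle\varphi\rangle$, so $\varphi^{gn}=\varphi^k$ for some $k$. Now $gn\in G$ and $S/G$ is abelian, so $\varphi^{gn}G=\varphi G$. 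Thus $\varphi^k\varphi^{-1}\in G\cap\langle\varphi\rangle=1$, which gives $\varphi^{gn}=\varphi$. Therefore $gn\in G_\varphi$ and $gN=gnN\in G_\varphi N/N$.

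For (ii), put $K=[G,\varphi]$. It is normal in $G$ because of the identity $[x,\varphi]^y=[xy,\varphi][y,\varphi]^{-1}$. The automorphism $\varphi$ acts trivially on $G/K$, so (i) gives $G/K=G_\varphi K/K$, that is, $G=G_\varphi K$.

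For (iii), let $x\in G$ and write $x=ck$ with $c\in G_\varphi$ and $k\in K$, using (ii). Then
$$[x,\varphi]=k^{-1}c^{-1}c\,k^\varphi=[k,\varphi]\in[K,\varphi].$$
Hence every generator of $K$ lies in $[K,\varphi]$, so $K\le[K,\varphi]$. The reverse inclusion is trivial, giving $K=[K,\varphi]$. This step needs no normality argument at all.

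For (iv), counting fixed points in the action on Sylow subgroups is not sufficient when $|\varphi|$ is not prime, so instead I use a Frattini argument. Let $P$ be a Sylow $p$-subgroup of $G$. Since all Sylow $p$-subgroups of $G$ are $G$-conjugate, $S=G\,N_S(P)$. Hence $N_S(P)/N_G(P)\cong S/G$, which has order $|\varphi|$, coprime to $|N_G(P)|$. By Schur--Zassenhaus, $N_S(P)$ contains a complement $\langle\psi\rangle$ to $N_G(P)$, and $\langle\psi\rangle$ is also a complement to $G$ in $S$. Conjugacy of complements to $G$ in $S$ gives $y\in S$ with $\langle\psi\rangle^y=\langle\varphi\rangle$. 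Since $S=G\langle\psi\rangle$, we may take $y\in G$. Then $\varphi$ normalizes $P^y$, so $P^y$ is a $\varphi$-invariant Sylow $p$-subgroup.

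The main delicacy is the conjugacy part of Schur--Zassenhaus, which in general needs solvability of the normal subgroup or the quotient. Here it always holds because the relevant quotients are cyclic, so no appeal to the Feit--Thompson theorem is needed.
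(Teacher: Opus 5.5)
Your proof is correct. The paper gives no argument of its own for this lemma and simply cites \cite[Theorem 6.2.2]{gorenstein}; your treatment is essentially the standard textbook proof behind that citation: Schur--Zassenhaus for complements of $N$ in $N\langle\varphi\rangle$ in (i), and the Frattini argument in $S=G\langle\varphi\rangle$ in (iv), with conjugacy of complements used only where the quotient is cyclic. The one detail left implicit is in (ii): to apply (i) you need $K=[G,\varphi]$ to be $\varphi$-invariant, which follows at once from $[x,\varphi]^{\varphi}=[x^{\varphi},\varphi]$.
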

    
    \begin{lemma} \label{aut.cop1}
        Let $G$ be a finite group admitting an automorphism $\varphi$ such that $(|\varphi|, |G|)=1$. If $N$ is a normal $\varphi$-invariant subgroup of $G$ such that $N \leq G_{\varphi}$, then $[G,\varphi]$ centralizes $N$.
    \end{lemma}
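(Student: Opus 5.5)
Since $N\le G_\varphi$ is normal and $\varphi$-invariant, the plan is to show that every element of $[G,\varphi]$ centralizes $N$. It suffices to prove that the generators $g^{-1}g^{\varphi}$ with $g\in G$ centralize $N$; as noted after the definition of $G_{-\varphi}$, these generate $[G,\varphi]$. The argument is a direct computation, and coprimality will not even be needed.

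Fix $g\in G$ and $n\in N$. Because $N$ is normal, $g n g^{-1}\in N\le G_\varphi$, and hence $(gng^{-1})^\varphi=gng^{-1}$. Applying $\varphi$ and using that $n$ is itself fixed by $\varphi$, we obtain $g^\varphi n (g^\varphi)^{-1} = g n g^{-1}$. Rearranging gives $g^{-1}g^\varphi\, n = n\, g^{-1}g^{\varphi}$. So $g^{-1}g^\varphi$ commutes with every $n\in N$.

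It follows that the subgroup generated by all such elements, namely $[G,\varphi]$, lies in $C_G(N)$. There is essentially no obstacle. The only care needed is to be consistent about the convention for conjugation and the commutator $[g,\varphi]=g^{-1}g^\varphi$. The same computation also shows the slightly stronger fact that $[G,\varphi]$ centralizes any $\varphi$-invariant normal subgroup fixed pointwise by $\varphi$. This is the form in which Lemma \ref{aut.inv}(iii) (the case $N=N_\varphi$) is used.
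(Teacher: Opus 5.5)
Your argument is correct and is essentially the paper's proof. Both conjugate an element of $N$, use that the conjugate and the element itself are fixed by $\varphi$, and conclude that the generators of $[G,\varphi]$ centralize $N$; the paper conjugates as $x^g=g^{-1}xg$ and so obtains $gg^{-\varphi}$, which is $h^{-1}h^{\varphi}$ for $h=g^{-1}$, and, as you note, neither version uses coprimality.
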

    \begin{proof}
        Let $x\in N$ and $g \in G$ be arbitrary elements. Note that $x^{g} \in N$, and therefore $x^{g} = (x^{g})^{\varphi}=x^{g^{\varphi}}$, which implies that $gg^{-\varphi}$ centralizes $x$. Hence the result follows.
    \end{proof}
    
    We will use the above lemmas frequently, sometimes without explicit reference. We also use, without further mention, the Feit-Thompson theorem that every finite group of odd order is solvable \cite{feitthompson}. 

    \begin{lemma} \label{lema1}
        Let $G$ be a finite group of odd order admitting an involutory automorphism $\varphi$ such that $G=[G, \varphi]$. Then $\pi(G_{\varphi})=\pi(G')$. 
    \end{lemma}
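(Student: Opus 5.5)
The equality of prime sets $\pi(G_\varphi)=\pi(G')$ splits into two inclusions, and I will prove each separately using Lemma \ref{aut.inv}.

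\emph{First inclusion, $\pi(G_\varphi)\subseteq\pi(G')$.} By Lemma \ref{aut.inv}(iv), the normal closure of $G_\varphi$ equals $G'$. In particular $G_\varphi\le G'$, so every prime dividing $|G_\varphi|$ also divides $|G'|$.

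\emph{Second inclusion, $\pi(G')\subseteq\pi(G_\varphi)$.} Let $p\in\pi(G')$ and suppose, for a contradiction, that $p\notin\pi(G_\varphi)$.

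\begin{enumerate}
\item \emph{Pass to a $p$-group quotient.} Since $G$ is solvable, we may choose a $\varphi$-invariant normal subgroup $N$ of $G$ such that $G'/N$ is a nontrivial $p$-group. For instance, let $N$ be the smallest normal subgroup of $G'$ with $p'$-quotient (characteristic in $G'$, hence $\varphi$-invariant and normal in $G$), and then refine further inside it. A cleaner route is to take a chief-like factor: pick $\varphi$-invariant normal subgroups $N<M\le G'$ of $G$ with $M/N$ an elementary abelian $p$-group. Such a pair exists because $p$ divides $|G'|$ and $G$ is solvable; refine a $\varphi$-invariant normal series of $G$ through $G'$.

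\item \emph{The quotient still satisfies the hypotheses.} In $\bar G=G/N$ we still have $\bar G=[\bar G,\varphi]$. Moreover, by Lemma \ref{aut.cop}(i), $\bar G_\varphi=G_\varphi N/N$, which is a $p'$-group.

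\item \emph{$\varphi$ inverts $\bar M$.} The centralizer $\bar M_\varphi=M_\varphi N/N$ is contained in the $p'$-group $\bar G_\varphi$. Since $\bar M$ is a $p$-group, $\bar M_\varphi=1$. By Lemma \ref{aut.inv}(i), $\bar M=\bar M_{-\varphi}$, so $\varphi$ inverts $\bar M$.

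\item \emph{$\bar M$ is central.} By Lemma \ref{aut.inv}(iii), $[\bar G,\varphi]=\bar G$ centralizes $\bar M$, so $\bar M\le Z(\bar G)$.
\end{enumerate}

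This alone gives no contradiction yet, so I will choose the section more carefully. Replace $\bar G$ by $G/O_{p'}(G')$ suitably. The simplest workable choice is to take $N$ maximal among $\varphi$-invariant normal subgroups of $G$ contained in $G'$ such that $p$ divides $|G'/N|$. Then every minimal $\varphi$-invariant normal subgroup of $\bar G$ inside $\bar G'$ is a $p$-group $\bar M$. By the argument in steps 3 and 4, $\bar M$ is central in $\bar G$.

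\emph{Deriving the contradiction.} Work in $\bar G$, which has odd order. Take a $\varphi$-invariant Sylow $p$-subgroup $\bar P$ of $\bar G'$, which exists by Lemma \ref{aut.cop}(iv). Then $\bar P_\varphi=1$, so $\varphi$ inverts $\bar P$ and $\bar P$ is abelian.

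I expect this to be the main obstacle. The plan is to apply Lemma \ref{aut.inv}(iv): $\bar G'$ is the normal closure of $\bar G_\varphi$, a $p'$-group. Since $\bar G=[\bar G,\varphi]$, write $\bar G=\bar G_\varphi\bar G_{-\varphi}$. The key sub-claim to establish is that a $\varphi$-invariant normal $p$-subgroup inverted by $\varphi$ is central, which steps 3 and 4 give. This should propagate to show that $O_p(\bar G')$ is central in $\bar G$.

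The cleanest path is to induct on $|G|$. Suppose $O_{p'}(\bar G')\neq 1$. Factoring it out keeps the hypotheses, and induction gives $p\in\pi(\bar G_\varphi)$ after lifting via Lemma \ref{aut.cop}(i). So we may assume $O_{p'}(\bar G')=1$, hence $F(\bar G')$ is a $p$-group. It is inverted by $\varphi$ because its fixed points lie in the $p'$-group $\bar G_\varphi$. Therefore it is central in $\bar G$ by Lemma \ref{aut.inv}(iii).

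Consequently $\bar G'$ centralizes $F(\bar G')$. Since $\bar G'$ is solvable, $C_{\bar G'}(F(\bar G'))\le F(\bar G')$, so $\bar G'=F(\bar G')$ is a $p$-group inverted by $\varphi$ and central in $\bar G$. Now $\bar G_\varphi\le\bar G'$ by Lemma \ref{aut.inv}(iv), and $\bar G'_\varphi=1$, so $\bar G_\varphi=1$. Then $\bar G=\bar G_{-\varphi}$ is abelian, which gives $\bar G'=1$. This contradicts $p\mid|\bar G'|$.
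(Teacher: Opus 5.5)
Your final argument (the induction paragraph and what follows) is correct and is essentially the paper's proof. The paper factors out $O_{p'}(G)$ and uses $C_G(O_p(G))\le O_p(G)$; you factor out $O_{p'}(G')$ and use $C_{G'}(F(G'))\le F(G')$. In both, the relevant normal $p$-subgroup has no nontrivial $\varphi$-fixed points, so it is central by Lemma~\ref{aut.inv}(iii), and self-centralization then forces the group to be abelian. Steps 1--4, the Sylow subgroup $\bar P$ and the maximal choice of $N$ are never used, and the induction on $|G|$ can be replaced by passing directly to the quotient by $O_{p'}(G')$, which already satisfies all the hypotheses.
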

    \begin{proof}
        Since $G_{\varphi}\leq G'$, we have $\pi(G_{\varphi}) \subseteq \pi(G')$. Now let $p \notin \pi(G_{\varphi})$. We need to show that $p \notin \pi(G')$. Suppose that this is not the case and $p \in \pi(G')$.  Passing to the quotient $G/O_{p'}(G)$, without loss of generality we may assume that $O_{p'}(G) = 1$.

 Let $M=O_p(G)$. Observe that $C_G(M)\leq M$ (see \cite[Theorem 6.3.2]{gorenstein}). By hypothesis $C_M(\varphi)=1$ and so $M\leq Z(G)$. It follows that $G = M$. Hence $G$ is abelian, and therefore $p\notin \pi(G')$.
    \end{proof}
    
Let $M$ be a normal subgroup of a group $G$. In a natural way $G/C_{G}(M)$ acts on $M$ by conjugation and so we can form the semidirect product $M \rtimes G/C_{G}(M)$. If $G$ admits an automorphism $\varphi$ and $M$ is $\varphi$-invariant, then $\varphi$ induces an automorphism of $M \rtimes G/C_{G}(M)$, which we denote by the same symbol $\varphi$.

    \begin{lemma} \label{pre.lema7}
        Let $G$ be a finite group of odd order admitting an involutory automorphism $\varphi$ such that $G=[G, \varphi]$, and let $M$ be a minimal $\varphi$-invariant normal subgroup of $G$. Assume that $M$ is not contained in $Z(G)$ and let $\tilde{G}=M \rtimes G/C_{G}(M)$. Then $\tilde{G}=[\tilde{G}, \varphi]$. Furthermore, if $M_{\varphi} \cap \gamma_{\infty}(G_{\varphi})=1$, then $M_{\varphi} \cap \gamma_{\infty}(\tilde{G}_{\varphi})=1$.
\end{lemma}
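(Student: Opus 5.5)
The plan is to prove the two assertions separately, working with $Q=G/C_G(M)$. This quotient is $\varphi$-invariant because $M$ is, so $\varphi$ acts on $\tilde G=M\rtimes Q$ as described before the statement.

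\emph{First assertion.} Since $G=[G,\varphi]$ and $[Q,\varphi]$ is the image of $[G,\varphi]$ in $Q$, we get $Q=[Q,\varphi]\le[\tilde G,\varphi]$. The subgroup $[\tilde G,\varphi]$ is normal in $\tilde G$, so it also contains $[M,Q]$. In $\tilde G$ the commutator $[M,Q]$ equals $[M,G]$, computed inside $M$. Now $[M,G]$ is a $\varphi$-invariant normal subgroup of $G$ contained in $M$. It is nontrivial because $M\not\le Z(G)$. By minimality of $M$ we get $[M,G]=M$. Hence $[\tilde G,\varphi]\ge MQ=\tilde G$.

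\emph{Second assertion.} By Lemma \ref{aut.cop}(i), $\tilde G_\varphi=M_\varphi\rtimes Q_\varphi$ and $Q_\varphi=G_\varphi C_G(M)/C_G(M)$. Form the semidirect product $S=M_\varphi\rtimes G_\varphi$, with $G_\varphi$ acting by conjugation. We use three homomorphisms out of $S$:
\begin{itemize}
\item $\theta\colon S\to\tilde G_\varphi$, $(m,g)\mapsto(m,gC_G(M))$, which is surjective;
\item $\psi\colon S\to G_\varphi$, $(m,g)\mapsto mg$, which is a homomorphism because $M_\varphi\le G_\varphi$ and the action is conjugation;
\item the projection $\rho\colon S\to G_\varphi$, $(m,g)\mapsto g$.
\end{itemize}
Surjective homomorphisms map nilpotent residuals onto nilpotent residuals, so $\theta(\gamma_\infty(S))=\gamma_\infty(\tilde G_\varphi)$. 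For arbitrary homomorphisms we still have $\psi(\gamma_\infty(S))\le\gamma_\infty(G_\varphi)$ and $\rho(\gamma_\infty(S))\le\gamma_\infty(G_\varphi)$.

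Now take $x\in M_\varphi\cap\gamma_\infty(\tilde G_\varphi)$. Choose $s=(m,g)\in\gamma_\infty(S)$ with $\theta(s)=x$. This forces $m=x$ and $g\in C_G(M)$. Then $xg=\psi(s)\in\gamma_\infty(G_\varphi)$ and $g=\rho(s)\in\gamma_\infty(G_\varphi)$. Hence $x=(xg)g^{-1}\in M_\varphi\cap\gamma_\infty(G_\varphi)=1$.

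\emph{Main obstacle.} The only delicate point is setting up the covering group $S$ so that both $\psi$ and $\theta$ are homomorphisms. This works because $M_\varphi\le G_\varphi$, and because the action of $Q_\varphi$ on $M_\varphi$ is induced by conjugation in $G_\varphi$.
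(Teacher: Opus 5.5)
Your proof is correct. The first assertion is argued exactly as in the paper, but you reach the second by a different route.

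The paper stays inside $\tilde G_\varphi=M_\varphi\rtimes\overline G_\varphi$, where $\overline G=G/C_G(M)$. From $[M_\varphi,\gamma_\infty(G_\varphi)]\le M_\varphi\cap\gamma_\infty(G_\varphi)=1$ it deduces that $\gamma_\infty(\overline G_\varphi)$ centralizes $M_\varphi$, so it is normal in $\tilde G_\varphi$. It then asserts that $\tilde G_\varphi/\gamma_\infty(\overline G_\varphi)\cong M_\varphi\rtimes(\overline G_\varphi/\gamma_\infty(\overline G_\varphi))$ is nilpotent. Hence $\gamma_\infty(\tilde G_\varphi)\le\gamma_\infty(\overline G_\varphi)$, which lies in the complement and meets $M_\varphi$ trivially.

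You instead lift to $S=M_\varphi\rtimes G_\varphi$, where $s\mapsto(\psi(s),\rho(s))$ embeds $S$ into $G_\varphi\times G_\varphi$. This forces the $M_\varphi$-coordinate $\psi(s)\rho(s)^{-1}$ of every $s\in\gamma_\infty(S)$ into $M_\varphi\cap\gamma_\infty(G_\varphi)=1$, and you then push down along the surjection $\theta$. Note that the condition $g\in C_G(M)$ is never actually used. Your computation shows $\gamma_\infty(S)\le 1\times\gamma_\infty(G_\varphi)$, hence $\gamma_\infty(\tilde G_\varphi)\le\gamma_\infty(\overline G_\varphi)$, which is the same containment the paper obtains.

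The paper's version is shorter, but its nilpotency claim is stated without justification. The claim does hold: $M$ is a $p$-group, and $[M_\varphi,x]\le M_\varphi\cap\gamma_\infty(G_\varphi)=1$ for every $p'$-element $x\in G_\varphi$. So the $p'$-part of $\overline G_\varphi/\gamma_\infty(\overline G_\varphi)$ acts trivially on the $p$-group $M_\varphi$. Your covering-group argument makes that verification unnecessary. It uses nothing about $M$ beyond $M_\varphi\trianglelefteq G_\varphi$ and the surjectivity of $G_\varphi\to\overline G_\varphi$ from Lemma~\ref{aut.cop}(i); in particular it does not need $M$ to be a $p$-group.
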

\begin{proof}  Let $\overline{G} = G/C_{G}(M)$. It is easy to check that $M$ is a minimal $\varphi$-invariant normal subgroup of $\tilde{G}$. Indeed, let $1 \neq N \leq M$ be a $\varphi$-invariant normal subgroup of $\tilde{G}$. Since the action of $\overline{G}$ on $N$ coincides with the action of $G$, it follows that $N\trianglelefteq G$. Since $N$ is $\varphi$-invariant, by the minimality of $M$ in $G$ we conclude that $M=N$. 
        
        Next, we show that $\tilde{G}=[\tilde{G}, \varphi]$. Since $G = [G, \varphi]$, it follows that $\overline{G}=[\overline{G}, \varphi]$, which implies that $\overline{G} \leq [\tilde{G}, \varphi]$. Furthermore, since $M$ is not contained in $Z(G)$, we have that $[M, G] \neq 1$. As $[M, \overline{G}]$ is a normal $\varphi$-invariant subgroup contained in M, it follows that $M=[M, \overline{G}]$. Since $[M, \overline{G}] \leq [\tilde{G}, \varphi]$, we obtain $M \leq [\tilde{G}, \varphi]$, which guarantees $\tilde{G}=[\tilde{G}, \varphi]$.
    
        Finally, suppose that $M_{\varphi} \cap \gamma_{\infty}(G_{\varphi}) =1$. Observe that $\tilde{G}_{\varphi}= M_{\varphi} \rtimes \overline{G}_{\varphi}$. Since $[M_{\varphi}, \gamma_{\infty}(G_{\varphi})] \leq M_{\varphi} \cap \gamma_{\infty}(G_{\varphi}) = 1$, we conclude that $\gamma_{\infty}(G_{\varphi})$ acts trivially on $M_{\varphi}$. This means that $\gamma_{\infty}(\overline{G}_{\varphi})$ centralizes $M_{\varphi}$, which ensures that $\gamma_{\infty}(\overline{G}_{\varphi}) \trianglelefteq \tilde{G}_{\varphi}$. Consequently, the quotient $\tilde{G}_{\varphi}/ \gamma_{\infty}(\overline{G}_{\varphi}) \cong M_{\varphi} \rtimes \left(\overline{G}_{\varphi}/ \gamma_{\infty}(\overline{G}_{\varphi})\right)$ is nilpotent. Hence $\gamma_{\infty}(\tilde{G}_{\varphi}) \leq \gamma_{\infty}(\overline{G}_{\varphi})$. By the definition of the semidirect product, we have $M_{\varphi} \cap \overline{G}_{\varphi} = 1$, implying that
        \begin{equation*}
            M_{\varphi} \cap \gamma_{\infty}(\tilde{G}_{\varphi}) \le M_{\varphi} \cap \gamma_{\infty}(\overline{G}_{\varphi})=1.
        \end{equation*}
    \end{proof}

We will require the following proposition from \cite{shumyatsky1993}.
\begin{proposition}\label{93}  Let $G$ be a finite group of odd order admitting an involutory automorphism $\varphi$ such that $G=[G,\varphi]$. Let $N$ be a normal $\varphi$-invariant subgroup of $G$ such that $N_\varphi$ has a normal Hall $\pi$-subgroup $H$. Then $H\leq O_\pi(G)$. In particular, if $N_\varphi$ is nilpotent, then $N_\varphi\leq F(G)$.
\end{proposition}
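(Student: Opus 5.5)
We argue by induction on $|G|$, arranging things so that everything reduces to the action of $H$ on a single minimal $\varphi$-invariant normal subgroup. Put $O=O_\pi(G)$. By Lemma \ref{aut.cop}(i), $(NO/O)_\varphi=N_\varphi O/O$, and $HO/O$ is a normal Hall $\pi$-subgroup of it. Also $G/O=[G/O,\varphi]$, and $O_\pi(G/O)=1$. So it suffices to show that $O_\pi(G)=1$ forces $H=1$. Assume $O_\pi(G)=1$ and $H\neq1$, and seek a contradiction.

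Let $M$ be a minimal $\varphi$-invariant normal subgroup of $G$. Since $G$ is solvable, $M$ is an elementary abelian $p$-group, and $p\notin\pi$ because $O_\pi(G)=1$. Applying induction to $G/M$ (with $NM/M$ and $HM/M$, using Lemma \ref{aut.cop}(i) again) gives $HM\le L$, where $L/M=O_\pi(G/M)$. Write $L=MP$ with $P$ a $\varphi$-invariant Hall $\pi$-subgroup (Lemma \ref{aut.cop}(iv)). Since $O_\pi(G)=1$, the Hall $\pi$-part of $C_L(M)$ is characteristic in a normal subgroup, hence trivial. So $C_L(M)=M$, and $L/M$ acts faithfully on $M$. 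Hence it suffices to show that $H$ centralizes $M$; this gives $H\le C_L(M)=M$, and $H$ is a $\pi$-group while $M$ is a $p$-group, so $H=1$.

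Two cases. If $M\not\le N$, then $M\cap N$ is a proper $\varphi$-invariant normal subgroup of $M$, so $M\cap N=1$ and $[M,N]=1$; in particular $H$ centralizes $M$. If $M\le N$, then $M_\varphi\le N_\varphi$, and both $H$ and $M_\varphi=M\cap N_\varphi$ are normal in $N_\varphi$. So $[H,M_\varphi]\le H\cap M=1$, and $H$ centralizes $M_\varphi$.

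\textbf{Main obstacle.} The hard step is to pass from $[H,M_\varphi]=1$ to $[H,M]=1$, i.e.\ to show $H$ also centralizes $M_{-\varphi}$; this is where $G=[G,\varphi]$ must be used. I would pass to $\tilde G=M\rtimes G/C_G(M)$ as in Lemma \ref{pre.lema7}, so that $\tilde G=[\tilde G,\varphi]$ and $M$ is a faithful minimal $\varphi$-invariant normal subgroup (if $M\le Z(G)$ there is nothing to prove). The plan is then to consider $C=C_{\tilde G}(M_\varphi)\cap L/C_G(M)$, which is $\varphi$-invariant and contains the image of $H$. Using Lemma \ref{aut.inv}(iii),(v) and Lemma \ref{aut.cop1}, I would show that the normal closure of the image of $H$ in the $\pi$-group $L/M$ acts on $M$ so that the subspace $[M,H]$ is $\varphi$-invariant, normalized by $G_\varphi$ and by $G_{-\varphi}$, and meets $M_\varphi$ trivially. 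By minimality of $M$, $[M,H]$ must then be $1$ or $M$. The case $[M,H]=M$ would give $M=M_{-\varphi}$, and then $M$ is central by Lemma \ref{aut.inv}(iii), a contradiction. Verifying that $[M,H]$ is genuinely $G$-invariant is the delicate point.

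For the final assertion, if $N_\varphi$ is nilpotent, apply the main statement to each Sylow $p$-subgroup of $N_\varphi$ (with $\pi=\{p\}$) to get $N_\varphi\le\prod_p O_p(G)=F(G)$.
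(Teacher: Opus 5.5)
The paper does not prove Proposition~\ref{93}. It quotes it from \cite{shumyatsky1993}, so your argument has to stand on its own.

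Everything you actually prove is correct:
the passage to $O_\pi(G)=1$;
the induction giving $HM\le L$ with $C_L(M)=M$;
the case $M\not\le N$;
and $[H,M_\varphi]\le H\cap M=1$ when $M\le N$.
It is also true that $[M,H]=[M_{-\varphi},H]\le M_{-\varphi}$ is $\varphi$-invariant and meets $M_\varphi$ trivially. It is normalized by $G_\varphi$, because $H$, as a normal Hall subgroup, is characteristic in $N_\varphi$, and $G_\varphi$ normalizes $N_\varphi$. So if $[M,H]$ were normal in $G$, your closing appeal to Lemma~\ref{aut.inv}(iii) would finish the proof.

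The gap is exactly the step you flag, and nothing in your sketch closes it. For $y\in G_{-\varphi}$ you have $[M,H]^y=[M,H^y]$, where $H^y$ is only known to centralize $M_{\varphi^y}=(M_\varphi)^y$. Lemma~\ref{aut.inv}(iii),(v) and Lemma~\ref{aut.cop1} say nothing relating $H^y$ to $H$.

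Passing to the normal closure does not help. $[M,\langle H^G\rangle]$ is normal in $G$, but by minimality it equals $M$ as soon as $[M,H]\neq 1$. Then it is not contained in $M_{-\varphi}$, since otherwise $M$ would be central by Lemma~\ref{aut.inv}(iii). So the two properties you need, normality and lying in $M_{-\varphi}$, hold for two different subgroups.

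Nor is this a routine verification. The parts you prove use $G=[G,\varphi]$ only to pass it to quotients, so this one step carries the whole content of the proposition. That content includes Asar's theorem \cite{asar1981}: take $N=G$ with $G_\varphi$ nilpotent; the conclusion $G_\varphi\le F(G)$ together with Lemma~\ref{aut.inv}(iv) gives $G'=\langle G_\varphi^G\rangle\le F(G)$.

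What is missing is a genuine argument that uses $G=[G,\varphi]$ to compare $H^y$ with $H$. One natural starting point: in $G\langle\varphi\rangle$, the element $\varphi y$ is an involution conjugate to $\varphi$ (by a square root of $y$ in $\langle y\rangle$), and it interchanges $H$ and $H^y$. As written, the proposal is a correct reduction followed by an unproved claim, not a proof.
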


In particular, the above proposition is used in the proof of the following lemma.

    \begin{lemma}\label{lema7}
        Let $G$ be a finite group of odd order admitting an involutory automorphism $\varphi$ such that $G=[G,\varphi]$. Let $N \leq G'$ be a $\varphi$-invariant normal subgroup of $G$ such that $N_{\varphi}\cap \gamma_{\infty}(G_{\varphi})=1$. Then $N \le Z_{\infty}(G')$.     
 \end{lemma}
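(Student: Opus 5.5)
We argue by induction on $|G|+|N|$, taking a counterexample with $|G|+|N|$ minimal. Since $N\le G'$ is normal, $N\le Z_\infty(G')$ holds as soon as $G'$ acts nilpotently on $N$. Concretely, it suffices to show that every $\varphi$-invariant $G$-chief-type factor of $N$ is centralized by $G'$. To see this, refine the series so that each factor is a minimal $\varphi$-invariant normal subgroup of a quotient.

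Let $M\le N$ be a minimal $\varphi$-invariant normal subgroup of $G$. The hypothesis $N_\varphi\cap\gamma_\infty(G_\varphi)=1$ passes to $G/M$: by Lemma \ref{aut.cop}(i), $(N/M)_\varphi=N_\varphi M/M$ and $\gamma_\infty((G/M)_\varphi)=\gamma_\infty(G_\varphi)M/M$. Elements of $N_\varphi$ and $\gamma_\infty(G_\varphi)$ that agree modulo $M$ differ by an element of $M_\varphi$. Unfortunately this need not lie in the intersection, so I would use instead a coprime-action argument. The image of $N_\varphi\cap \gamma_\infty(G_\varphi)M$ is contained in $N_\varphi\cap \gamma_\infty(G_\varphi)M_\varphi$. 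Since $N_\varphi$ normalizes $M_\varphi$ and $N_\varphi\cap\gamma_\infty(G_\varphi)=1$, a Dedekind-type count applied inside $G_\varphi$ handles this. Alternatively, one can simply choose $M$ to be the last term of a $\varphi$-invariant chief series of $G$ through $N$, and induct along the series.

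The core reduces to showing that $G'$ centralizes each such factor $M$, assumed minimal normal $\varphi$-invariant in $G$ with $M_\varphi\cap\gamma_\infty(G_\varphi)=1$. If $M\le Z(G)$ we are done. Otherwise Lemma \ref{pre.lema7} lets us pass to $\tilde G=M\rtimes G/C_G(M)$. There we have $\tilde G=[\tilde G,\varphi]$ and $M_\varphi\cap\gamma_\infty(\tilde G_\varphi)=1$. Moreover $M$ is an elementary abelian $p$-group with $C_{\tilde G}(M)=M$, and the goal is $\tilde G'\le C(M)$, i.e. $\overline G'=1$ for $\overline G=G/C_G(M)$.

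In $\tilde G$ I would apply Proposition \ref{93}. Let $H$ be the Hall subgroup $\gamma_\infty(\overline G_\varphi)$, or more precisely consider $\tilde G_\varphi=M_\varphi\rtimes\overline G_\varphi$. The argument of Lemma \ref{pre.lema7} shows that $\gamma_\infty(\overline G_\varphi)$ centralizes $M_\varphi$. Hence $M_\varphi\times\gamma_\infty(\overline G_\varphi)$ is normal in $\tilde G_\varphi$, and $\tilde G_\varphi$ modulo $\gamma_\infty(\overline G_\varphi)$ is nilpotent. Its Sylow $p$-subgroup $P$ then gives a normal Hall $p$-subgroup of $\tilde G_\varphi/\gamma_\infty(\overline G_\varphi)$. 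To obtain a genuinely normal Hall $p$-subgroup of $\tilde G_\varphi$ containing $M_\varphi$, I would aim to show $M_\varphi\le O_p(\tilde G_\varphi)$. Granting this, Proposition \ref{93} applied with $N=\tilde G$, together with an iteration over primes, yields that $\tilde G_\varphi$ is "nilpotent modulo a $p'$-part acting trivially on $M$". I would then use Lemma \ref{aut.inv}(iv) and Asar-type reasoning: the normal closure of $\tilde G_\varphi$ is $\tilde G'$, and the components of $\tilde G_\varphi$ that centralize $M_\varphi$ centralize $M$. The latter needs $M=[M,\varphi]$-type generation; here $M_\varphi\neq 1$ by Lemma \ref{aut.inv}(iii), since otherwise $M$ would be central. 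The conclusion is that $\overline G'$ acts trivially on $M$.

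The main obstacle is this last step: converting "$\gamma_\infty(\overline G_\varphi)$ centralizes $M_\varphi$" into "$\overline G'$ centralizes $M$". My first attempt would be to show that $\overline G_\varphi$ is a $p$-group, so that $\overline G'$ is a $p$-group (Lemma \ref{lema1}) acting faithfully on $M$ with $O_p(\overline G)=1$ (since $M$ is chief and $\tilde G=[\tilde G,\varphi]$), which forces $\overline G'=1$. To show $\overline G_\varphi$ is a $p$-group, I would take a $p'$-part $Q$ of the nilpotent group $\overline G_\varphi/\gamma_\infty$ together with $\gamma_\infty(\overline G_\varphi)$. This $p'$-subgroup centralizes $M_\varphi$, and I would show it is normal in $\tilde G_\varphi$ and hence, by Proposition \ref{93}, lies in $O_{p'}(\tilde G)=1$.
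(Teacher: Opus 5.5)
Your reductions are sound and run parallel to the paper's. The hypothesis does pass to $G/M$: if $n\in N_\varphi$, $g\in\gamma_\infty(G_\varphi)$ and $nM=gM$, then $g^{-1}n\in M_\varphi\le N_\varphi$, so $g\in N_\varphi\cap\gamma_\infty(G_\varphi)=1$. The problem then reduces to $[M,G']=1$ for a minimal $\varphi$-invariant normal $M\le N$, and you may work in $\tilde G$ with $C_{\tilde G}(M)=M$. Your endgame is also correct: if $\overline G_\varphi$ is a $p$-group, then $\tilde G'$ is a $p$-group by Lemma~\ref{lema1}, whence $\overline G'=1$.

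\textbf{The gap.} The step meant to show that $\overline G_\varphi$ is a $p$-group does not work. Your subgroup $R$, the preimage in $\overline G_\varphi$ of the Hall $p'$-subgroup of $\overline G_\varphi/\gamma_\infty(\overline G_\varphi)$, does centralize $M_\varphi$ and is normal in $\tilde G_\varphi$. But it contains $\gamma_\infty(\overline G_\varphi)$, whose order may be divisible by $p$. So $R$ is not a $p'$-group and not a Hall subgroup, and Proposition~\ref{93} does not apply. Your earlier plan via $M_\varphi\le O_p(\tilde G_\varphi)$ has the same defect: that inclusion is automatic and produces no normal Hall subgroup. Knowing that $\gamma_\infty(\overline G_\varphi)$ and all $p'$-elements of $\overline G_\varphi$ centralize $M_\varphi$ says nothing by itself about their action on $M$. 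You flagged this as the main obstacle, and it remains unresolved.

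\textbf{The missing idea.} Apply Proposition~\ref{93} not to $\tilde G$ but to a small normal subgroup whose fixed points are abelian.
\begin{enumerate}
\item Take a minimal $\varphi$-invariant normal subgroup $\overline Q$ of $\overline G$. Since $O_p(\overline G)=1$, it is a $q$-group with $q\neq p$.
\item $(M\overline Q)_\varphi=M_\varphi\overline Q_\varphi$ is abelian, because $[\overline Q_\varphi,M_\varphi]\le M_\varphi\cap\gamma_\infty(\tilde G_\varphi)=1$. Proposition~\ref{93} then gives $\overline Q_\varphi\le O_q(\tilde G)=1$.
\item By Lemma~\ref{aut.inv}(iii), $\overline Q=[\overline Q,\varphi]$ is central in $\overline G$, and $\overline Q$ centralizes $M/\langle M_\varphi^{\overline Q}\rangle$.
\item Together with $M=[M,\overline Q]$ (faithfulness plus minimality), this yields $M=\langle M_\varphi^{\overline Q}\rangle$.
\item Any $p'$-element $x\in\overline G_\varphi$ centralizes $M_\varphi$ and commutes with $\overline Q$. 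Hence it centralizes each $M_\varphi^{q}$, therefore all of $M$, so $x=1$ by faithfulness.
\end{enumerate}
This is how the paper shows that $G_\varphi$ is a $p$-group. It works in $G$ with $C_G(N)=N$, where $QN/N\le Z(G/N)$.
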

    \begin{proof} Suppose the lemma is false and let $G$ be a counterexample of minimal order. It follows from Lemma \ref{aut.inv} that $N/\left\langle N_{\varphi}^{G} \right\rangle$ is contained in the centre of $G/\left\langle N_{\varphi}^{G} \right\rangle$.  Therefore it is sufficient to establish the lemma for the subgroup $\left\langle N_{\varphi}^{G} \right\rangle$ in place of $N$. Thus, without loss of generality, we may assume that $N = \left\langle N_{\varphi}^{G} \right\rangle$. Since $N_\varphi$ is nilpotent, by Proposition \ref{93} we have $N \le F(G)$. Thus, we may assume that $N$ is a $p$-subgroup, for some prime $p \in \pi(N)$. 

Let $M \leq N$ be a minimal normal $\varphi$-invariant subgroup of $G$. By the minimality of $G$, the result holds for the quotient $G/M$. Therefore it is sufficient to show that $M \leq Z_{\infty}(G')$. 

Suppose that $M < C_{G}(M)$. By Lemma \ref{pre.lema7}, the result holds for $\tilde{G} = M \rtimes \overline{G}$, where $\overline{G} = G/C_{G}(M)$, implying that $M \leq Z_{\infty}(\tilde{G}')$. In this case $M \leq Z_{\infty}(G')$, which is a contradiction. Thus, we may assume that $M = C_{G}(M)$ and, consequently, $N = M$.

Let $Q$ be a $\varphi$-invariant  subgroup of $G'$ such that $QN/N$ is a minimal $\varphi$-invariant normal subgroup of $G/N$. Then $Q$ is an elementary abelian $q$-subgroup for some prime $q \neq p$. We have $Q = [Q, \varphi] \times Q_{\varphi}$. 

 Observe that $QN$ is a $\varphi$-invariant normal subgroup and so $(QN)_{\varphi} = Q_{\varphi}N_{\varphi}$. Furthermore, $[Q_{\varphi}, N_{\varphi}] \leq \gamma_{\infty}(G_{\varphi}) \cap N_{\varphi} = 1$. Therefore, $Q_{\varphi}$ centralizes $N_{\varphi}$. It follows that $(QN)_{\varphi}$ is abelian. By Proposition \ref{93}, $Q_{\varphi} \leq O_{q}(G)$ and, consequently, $[N,Q_{\varphi}] =1$. Since $C_G(N) = N$, we deduce that $Q_{\varphi} = 1$, and thus $Q = [Q, \varphi]$. In view of Lemma \ref{aut.inv} (iii) $Q$ centralizes $N / \left\langle N_{\varphi}^{Q} \right\rangle$. Since $N = [N,Q]$, it follows that $N = \left\langle N_{\varphi}^{Q} \right\rangle$. 
 
Since $Q_{\varphi} = 1$, by Lemma \ref{aut.inv} (iii) $QN/N\leq Z(G/N)$. Let $x \in G_{\varphi}$ be a $p'$-element. Note that $[N_{\varphi},x] \leq \gamma_{\infty}(G_{\varphi}) \cap N_{\varphi} = 1$, whence $N_{\varphi} \leq Z(\left\langle N,x \right\rangle)$. Taking into account that $QN/N \leq Z(G/N)$, we deduce that $Q$ normalizes $\left\langle N,x \right\rangle$. Recall that $N = \left\langle N_{\varphi}^{Q} \right\rangle$. 
Therefore $x$ centralizes $N$. Since $C_G(N)=N$ and $x$ is a $p'$-element, we conclude that $x=1$. Since $x$ was chosen arbitrarily, it follows that $G_{\varphi}$ is a $p$-group. By Lemma \ref{lema1},  $G'$ is also a $p$-group. Since $N$ is a minimal normal subgroup contained in $G'$, we obtain that $N \leq Z(G')$, a contradiction. 
\end{proof}

The following result is a theorem of Hartley and Meixner \cite{hartleyandmeixner}

 \begin{theorem} \label{hm} Let $m$ be a positive integer and $G$ a periodic group admitting an involutory automorphism $\varphi$ such that $|C_G(\varphi)|\leq m$. Then $G$ contains a nilpotent of class two subgroup of finite $m$-bounded index.
\end{theorem}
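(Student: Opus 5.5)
The plan is to prove the finite case first, with an explicit bound, and then pass to periodic groups by a local finiteness and compactness argument. Write $C=C_G(\varphi)$ and $I=G_{-\varphi}$ as in the preliminaries.

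\emph{Finite case: $I$ is large.} The map $g\mapsto g^{-1}g^{\varphi}$ from $G$ onto $I$ is constant exactly on the right cosets $Cg$. Hence
\[
|I|=[G:C]\ge |G|/m .
\]
Every element of $I$ is inverted by $\varphi$. For $a,b\in I$ we have $(ab)^{\varphi}=a^{-1}b^{-1}=(ba)^{-1}$. So $ab$ is inverted by $\varphi$ if and only if $a$ and $b$ commute.

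\emph{Finite case: an abelian-by-bounded structure.} Fix $a\in I$. Left multiplication $x\mapsto ax$ sends $I$ into $G$, and $G$ has at most $m$ cosets of $C$ on the $I$-side (fix the correct side when writing details). A counting argument over the $m$ pieces of the decomposition $G=CI$ should then give the following: for a positive proportion (depending only on $m$) of pairs $(a,b)\in I\times I$, the element $ab$ lies in $I$, so $a$ and $b$ commute. This commuting-probability statement, through a Neumann-type argument (the large commuting set has a centralizer of bounded index), yields a $\varphi$-invariant subgroup $H$ of $m$-bounded index in which the inverted elements commute pairwise "up to" the bounded set $C\cap H$.

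\emph{Finite case: nilpotency of class two.} I then pass to $K=[H,\varphi]$, possibly after intersecting the centralizers of the finitely many elements of $C\cap H$ to get a further subgroup of bounded index. In $K$ the elements of $K_{-\varphi}$ commute pairwise, and $K_\varphi$ is small and centralized. The identity $[x,y]^{\varphi}=[x^{-1},y^{-1}]$ for inverted $x,y$ shows that $K'$ is fixed by $\varphi$. It follows that $[K',K]$ lies in $K_\varphi$, and $K_\varphi$ is central in $K$, so $K$ has class at most two. I expect this step to be the main obstacle: arranging that the bounded-index subgroup is generated by pairwise commuting inverted elements. My first attempt will be to take the centralizer in $G$ of a maximal set of representatives of the small subgroup $C$ together with $\langle I\cap C_G(a)\rangle$ for generic $a$, and to control its index by the counting above.

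\emph{Periodic case.} By Shunkov's theorem, a periodic group with an involutory automorphism whose centralizer is finite is locally finite; more precisely, $G\langle\varphi\rangle$ is locally finite. Let $f(m)$ be the bound from the finite case. Every finite $\varphi$-invariant subgroup $F$ then has a subgroup of index at most $f(m)$ that is nilpotent of class at most two. Since each finite subgroup has only finitely many such subgroups, a Kurosh-type inverse limit (K\"onig's lemma) over the directed system of finite $\varphi$-invariant subgroups produces a compatible choice. The union of this compatible family is a subgroup of $G$ of index at most $f(m)$ that is nilpotent of class at most two, which proves the theorem.
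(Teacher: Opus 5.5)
The paper does not prove this statement; it quotes it from Hartley and Meixner, so your proposal has to stand on its own. Your reduction of the periodic case to the finite case is sound. Shunkov's theorem applies to the involution $\varphi$ of $G\langle\varphi\rangle$, whose centralizer $C_G(\varphi)\times\langle\varphi\rangle$ is finite, and then one passes to a compatible choice of subgroups; for uncountable $G$ that choice should come from compactness of an inverse limit of nonempty finite sets rather than K\"onig's lemma.

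The finite case, however, has a genuine gap at both of its steps. The counting step is only asserted. Write $G$ as the disjoint union of the sets $cI$, $c\in C$; this already needs $|G|$ odd, while the theorem concerns arbitrary periodic groups. Pigeonhole then gives, for each $a\in I$, some $c$ with $|aI\cap cI|\ge |I|/m$, but nothing forces $c=1$. So the positive proportion of pairs with $ab\in I$ does not follow.

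The class-two step aims at something false. If the elements of $K_{-\varphi}$ commuted pairwise, then $K=[H,\varphi]$, being generated by $\varphi$-inverted elements, would be an abelian subgroup of $m$-bounded index. But for odd $p$, the Heisenberg group of order $p^{2n+1}$ with $\varphi\colon(u,v,z)\mapsto(-u,-v,z)$ has $C_G(\varphi)=Z(G)$ of order $p$ and no abelian subgroup of index less than $p^n$.

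Nor can you intersect centralizers of the elements of $C\cap H$. Take $G=\langle c\rangle\ltimes(W\times Z)$ with $W,Z$ elementary abelian of rank $n$, $[c,w_i]=z_i$, $Z$ central, and $\varphi$ fixing $c$ and inverting $W\times Z$. Then $C_G(\varphi)=\langle c\rangle$, while $[G:C_G(c)]=p^n$. Also, the identity $[x,y]^{\varphi}=[x^{-1},y^{-1}]$ does not yield $K'\le K_\varphi$ outside class two.

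Your commuting-probability idea does work, but with a Brauer--Fowler count and with no further $\varphi$-analysis afterwards.
Put $X=G\langle\varphi\rangle$ and $J=\varphi^X\subseteq G\varphi$, so $|J|=[G:C_G(\varphi)]\ge |G|/m$. For $s,t\in J$ we have $st\in G$. Moreover $st=x$ forces $t=sx$ with $(sx)^2=1$, that is, $x^s=x^{-1}$. The elements of $X$ inverting $x$ form at most one coset of $C_X(x)$. Hence at most $|C_X(x)|\le 2|C_G(x)|$ pairs have product $x$.

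Summing over $x\in G$ gives $|G|^2/m^2\le |J|^2\le 2k(G)|G|$. So the commuting probability $k(G)/|G|$ is at least $1/(2m^2)$, for every finite $G$, of odd order or not. By P. M. Neumann's theorem, $G$ then has a normal subgroup $N$ with $[G:N]$ and $|N'|$ both $m$-bounded. Since $[N',C_N(N')]=1$, the subgroup $C_N(N')$ is nilpotent of class at most two, and its index in $G$ is at most $[G:N]\cdot|N'|!$. Combined with your periodic-case argument, this proves the statement.
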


We will also need the following result due to Baer.

    \begin{lemma}[Baer, \cite{baer1953}] \label{teo.baer}
        Let $G$ be a finite group. A $p$-element $x \in G$ belongs to the hypercenter $Z_{\infty}(G)$ if and only if $[x,g]=1$ for every $p'$-element $g \in G$.
    \end{lemma}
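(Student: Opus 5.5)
The plan is to prove the two implications separately. The forward direction uses the standard fact that a coprime automorphism stabilizing a normal series of a $p$-group acts trivially. The converse uses the observation that $O^p(G)$ is generated by $p'$-elements, which reduces everything to the action of a $p$-group on a $p$-group.

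\emph{Forward direction.} Let $x\in Z_\infty(G)$ be a $p$-element and let $g\in G$ be a $p'$-element. Since $Z_\infty(G)$ is nilpotent, its $p$-elements form its Sylow $p$-subgroup $P$, which is characteristic in $Z_\infty(G)$ and hence normal in $G$. So $g$ acts by conjugation on $P$ as an automorphism of $p'$-order. This automorphism acts trivially on each factor $Z_{i+1}(G)/Z_i(G)$, so it stabilizes the series $P\cap Z_i(G)$ of $P$. An automorphism of $p'$-order stabilizing a normal series of a $p$-group acts trivially on it (\cite[Theorem 5.3.2]{gorenstein}). Hence $[x,g]=1$.

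\emph{Converse.} Let $x$ be a $p$-element commuting with every $p'$-element. Since $O^p(G)$ is generated by the $p'$-elements of $G$, we get $x\in C_G(O^p(G))$. Let $L=\langle x^G\rangle$. Every conjugate of $x$ is again a $p$-element commuting with all $p'$-elements, and $L\le C_G(O^p(G))$.

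\emph{$L$ is a $p$-group.}
\begin{itemize}
\item $L\cap O^p(G)$ is central in $L$, because $L$ centralizes $O^p(G)$.
\item $L/(L\cap O^p(G))$ embeds in $G/O^p(G)$, which is a $p$-group.
\item Hence $L$ is central-by-nilpotent, so nilpotent, and $L=P_L\times R$ with $R$ its Hall $p'$-subgroup.
\item The generators $x^h$ of $L$ are $p$-elements, so they lie in $P_L$. Therefore $L=P_L$ is a $p$-group.
\end{itemize}

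\emph{$L$ lies in the hypercentre.} Choose a Sylow $p$-subgroup $S$ of $G$; then $G=S\,O^p(G)$. Since $O^p(G)$ centralizes $L$, we get $[L,G]=[L,S]$ and more generally $[L,{}_kG]=[L,{}_kS]$. Now $LS$ is a $p$-group, so it is nilpotent, and therefore $[L,{}_kS]=1$ for some $k$. Because $L$ is normal in $G$, the chain $L\ge [L,G]\ge[L,G,G]\ge\cdots$ consists of normal subgroups of $G$ with each factor central, and it terminates at $1$. Hence $L\le Z_\infty(G)$, and in particular $x\in Z_\infty(G)$.

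The main obstacle is the step showing that $L$ is a $p$-group. The nilpotency of $L$ has to be extracted from the fact that it centralizes $O^p(G)$ modulo a $p$-quotient. The remaining steps are standard coprime-action and $p$-group arguments.
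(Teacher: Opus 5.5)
Your proof is correct. The paper gives no proof of this statement: it imports the result from Baer's paper \cite{baer1953} and uses it as a black box in the proofs of Lemma~\ref{lema8} and Theorem~\ref{main}. So your argument is a self-contained replacement for the citation, not a variant of an argument in the text.

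Both directions check out.
In the forward direction, the series $P\cap Z_i(G)$ of the Sylow $p$-subgroup $P$ of $Z_\infty(G)$ satisfies $[P\cap Z_{i+1}(G),g]\le P\cap Z_i(G)$, so the coprime-stability theorem applies.
In the converse, $L=\langle x^G\rangle$ centralizes $O^p(G)$. It is therefore nilpotent, and since it is generated by $p$-elements it is a $p$-group. Because $G=S\,O^p(G)$ and $O^p(G)$ centralizes each $[L,{}_jG]\le L$, induction gives $[L,{}_kG]=[L,{}_kS]$, and this is $1$ for large $k$.

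You use without comment that $O^p(G)$ is generated by the $p'$-elements of $G$. This is standard, but it deserves a line since the whole converse rests on it. The subgroup generated by the $p'$-elements is normal and contains the $p'$-part of every element, so its quotient is a $p$-group. Conversely, every $p'$-element is trivial in the $p$-group $G/O^p(G)$.

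The paper's citation buys brevity. Your route makes the lemma independent of Baer's paper, using only standard coprime-action facts.

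You could shorten the forward direction. Put $H=Z_\infty(G)\langle g\rangle$; then $Z_\infty(G)\le Z_\infty(H)$ and $H/Z_\infty(G)$ is cyclic, so $H$ is nilpotent. In a nilpotent group, $p$-elements commute with $p'$-elements.
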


The next result is taken from \cite[Theorem 3.2]{kurdachenko}.

  \begin{lemma} \label{kurda}
        Let $G$ be a finite group such that $|G/Z_{\infty}(G)|\leq m$. Then $|\gamma_\infty(G)|$ is $m$-bounded.
    \end{lemma}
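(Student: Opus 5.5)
The plan is to first split off the part of $Z_\infty(G)$ of order prime to $|G/Z_\infty(G)|$, then bound $|\gamma_\infty(G)|$ in two stages: a Schur-type bound for $K'$, and a separate bound for the abelian quotient $K/K'$, where $K=\gamma_\infty(G)$. Baer's criterion (Lemma \ref{teo.baer}) is the main tool throughout. Write $Z=Z_\infty(G)$ and $\pi=\pi(G/Z)$; every prime in $\pi$ is at most $m$.

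\emph{Reduction to a $\pi$-group.} Since $Z$ is nilpotent, $Z=Z_\pi\times Z_{\pi'}$, and $Z_{\pi'}$ is a normal Hall subgroup of $G$. By Schur--Zassenhaus, $G=Z_{\pi'}\rtimes H$ with $H$ a $\pi$-group. By Lemma \ref{teo.baer}, every $q$-element of $Z$ commutes with every $q'$-element of $G$. Hence $Z_{\pi'}$ commutes with $H$, so $G=Z_{\pi'}\times H$ and $\gamma_\infty(G)=\gamma_\infty(H)$. Moreover $H\cong G/Z_{\pi'}$, so $|H/Z_\infty(H)|\le m$. We may therefore assume that $G$ is a $\pi$-group. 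Note that in the final step below only the primes dividing $|K|$ matter, and these all lie in $\pi$.

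\emph{Bounding $K'$.} Since $|KZ/Z|\le m$, it suffices to bound $|K\cap Z|$. Fix $p$ and let $D_p$ be the Sylow $p$-subgroup of $K\cap Z$. The quotient $G/O^p(G)$ is a $p$-group, hence nilpotent, so $K\le O^p(G)$. Now $O^p(G)$ is generated by $p'$-elements, and by Lemma \ref{teo.baer} each of them centralizes the $p$-elements of $Z$. Thus $D_p\le Z(O^p(G))$, and in particular $D_p\le Z(K)$. Therefore $K\cap Z\le Z(K)$ and $|K/Z(K)|\le m$. Schur's theorem then gives that $|K'|$ is $m$-bounded.

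\emph{Bounding $K/K'$: the main obstacle.} Pass to $G/K'$, so that $K$ is abelian, $K=[K,G]$, and $|K/(K\cap Z)|\le m$. For each prime $p$ the Sylow subgroup $K_p$ is normal and satisfies $K_p=[K_p,G]$. By Baer's criterion, $K_p\cap Z$ is centralized by $O^p(G)$. Since $K$ is abelian, $[K_p,G]$ is generated modulo $K_p\cap Z$ by commutators with a bounded set of coset representatives of $Z$ in $G$. I would then apply a coprime-action argument to the $p'$-part of $G$ acting on $K_p$: $K_p=[K_p,O^p(G)]$ is centralized modulo the bounded section $K_p/(K_p\cap Z)$, and coprime action should force $K_p\cap Z\cap [K_p,O^p(G)]$ to be controlled by $|K_p/(K_p\cap Z)|$. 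This would bound $|K_p|$ in terms of $m$. Since only primes $p\le m$ occur, multiplying over $p$ gives the bound on $|K/K'|$. I expect this last step, making the coprime-action estimate quantitative, to be the hardest part.
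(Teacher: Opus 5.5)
The paper gives no proof of this lemma; it quotes \cite[Theorem 3.2]{kurdachenko}. Your reduction to a $\pi$-group and your Schur-type bound on $|K'|$ are correct. There is, however, a genuine gap exactly where you expect it: the bound on $K/K'$ is not proved, and the mechanism you propose cannot work with only the data you feed into it. Since $K_p=[K_p,O^p(G)]$, your claim amounts to bounding $|K_p\cap Z|$ in terms of $|K_p/(K_p\cap Z)|$, using coprime action of a group generated by $p'$-elements that centralizes $K_p\cap Z$. That is false in general. Take $R=E\rtimes\langle g\rangle$, with $g$ of order $3$ acting fixed-point-freely on a large elementary abelian $2$-group $E$. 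Then $R$ is generated by $2'$-elements, yet it has modules $A=[A,R]$ with $|A/C_A(R)|=4$ and $|C_A(R)|$ arbitrarily large. Coprime action applied to one $p'$-element $g$ at a time only shows that each $[K_p,g]$ is small. What is missing is a bound on the number of distinct subgroups $[K_p,g]$, i.e.\ on the image of $G$ in $\operatorname{Aut}(K_p)$.

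That bound comes from the argument you already used for $D_p$, applied to the whole Sylow $p$-subgroup $Z_p$ of $Z=Z_\infty(G)$. By Lemma \ref{teo.baer}, $Z_p$ commutes with every $p'$-element, hence centralizes $O^p(G)$, which contains $K$. So $[Z_\infty(G),\gamma_\infty(G)]=1$ and $|G/C_G(K)|\le m$.

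Now work in $G/K'$, which inherits the hypothesis. For a $p'$-element $g$ you have $K_p=[K_p,g]\times C_{K_p}(g)$ with $K_p\cap Z\le C_{K_p}(g)$. Hence $[K_p,g]$ embeds in $K_p/(K_p\cap Z)$ and has order at most $m$. Because $K_p$ is abelian and normal, and the $p'$-elements form a conjugation-invariant generating set of $O^p(G)$, $K_p=[K_p,O^p(G)]$ is the product of the subgroups $[K_p,g]$. Each $[K_p,g]$ depends only on the coset $gC_G(K)$, so at most $m$ distinct factors occur and $|K_p|\le m^m$. Only primes $p\le m$ are involved, so $|K/K'|$ is $m$-bounded. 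Combined with your bound on $|K'|$, this completes the proof.
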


%%%%%%%%%%%%%%%%%%%%%%%%%%%%%%%%%%%%%%%%%%%%%%%%%%%%%%%%%%%%%%%%%%%
\section{Proof of the Theorem} 

    We begin this section with an important technical lemma that will be used in the proof of the theorem. Recall that the generalized Fitting subgroup $F^*(G)$ is the product of the Fitting subgroup $F(G)$ and all subnormal quasisimple subgroups; here a group is quasisimple if it is perfect and its quotient by the centre is a nonabelian simple group.   In any finite group $G$ we have $C_G(F^*(G))\leq F^*(G)$ \cite[Theorem 13.12]{hb}. 

\begin{lemma}\label{lema8}
        Let $G$ be a finite group such that $\gamma_{\infty}(G)$ has order $m$ modulo $Z_{\infty}(G)$. Then $\gamma_{\infty}(G)$ has $m$-bounded order.
    \end{lemma} 
    \begin{proof} The proof will proceed by induction on $m$. If $m=1$, then $G$ is nilpotent and we have the result.

Suppose that $m \geq 2$ and the result holds for any group whose nilpotent residual modulo the hypercenter has order strictly less than $m$. Let $\overline{G} = G/Z_{\infty}(G)$. By hypothesis, $|\gamma_{\infty}(\overline{G})| = m$. It follows that the index $[\overline{G} : C_{\overline{G}}(\gamma_{\infty}(\overline{G}))]$ is $m$-bounded. Furthermore, $C_{\overline{G}}(\gamma_{\infty}(\overline{G}))$ is a nilpotent normal subgroup of $m$-bounded index, which implies that the index $[\overline{G}:F(\overline{G})]$ is $m$-bounded. Since $F(\overline{G}) = F(G)/Z_{\infty}(G)$, we obtain that the index $[G : F(G)]$ is bounded in terms of $m$.

If all Sylow subgroups of $F(G)$ are contained in $Z_{\infty}(G)$, then $F(G) = Z_{\infty}(G)$ and $F(\overline{G}) = 1$. In this case $C_{\overline{G}}(F^{*}(\overline{G})) =1$. Since $F^{*}(\overline{G}) = \gamma_{\infty}(F^{*}(\overline{G})) \leq \gamma_{\infty}(\overline{G})$, we have $|F^{*}(\overline{G})| \leq m$. It follows that the order of $\overline{G}$ is at most $m!$. By Lemma \ref{kurda}, we conclude that the order of $\gamma_{\infty}(G)$ is bounded by a function depending only on $m$. 
        
So suppose that there exists a Sylow $p$-subgroup $P$ of $F(G)$ that is not contained in $Z_{\infty}(G)$. By Lemma \ref{teo.baer} there exists a $p'$-element $a \in G$ such that $[P,a] \neq 1$. Write $H = [P,a]$ and $T = H \cap Z_{\infty}(G)$. Observe that $T \leq C_{H}(a)$.

By Lemma \ref{aut.cop1}, $T \leq Z(H)$. Since $G/\gamma_{\infty}(G)$ is nilpotent, elements of coprime orders commute modulo $\gamma_{\infty}(G)$. Since $P$ is a $p$-group and $a$ is a $p'$-element, we have $H = [P,a] \leq \gamma_{\infty}(G)$. Thus, the index $[H : T] \leq m$ and we conclude that $[H : Z(H)]$ is $m$-bounded. By Schur's Theorem  \cite[Theorem~4.12]{rob1}, the order of the derived group $H'$ is $m$-bounded. Note that $H = [P,a] = [H,a]$. Therefore the quotient $H/H'$ has no nontrivial fixed points under the action of $a$. Since $T \leq C_{H}(a)$, the image of $T$ in $H/H'$ must be trivial, whence $T \leq H'$. It follows that the order of $T$ is $m$-bounded, and therefore, the order of $H$ is $m$-bounded as well. 
        
Since $F(G)$ normalizes $H$ and $[G : F(G)]$ is $m$-bounded, the number of conjugates of $H$ in $G$ is bounded in terms of $m$. All conjugates of $H$ normalize each other and have $m$-bounded order. Therefore the order of the normal closure $\left\langle H^{G}\right\rangle$ is bounded in terms of $m$. Set $\tilde{G} = G/\left\langle H^{G}\right\rangle$. Since $H \leq \gamma_{\infty}(G)$, it follows that the order of $\gamma_{\infty}(\tilde{G})Z_{\infty}(\tilde{G})/Z_{\infty}(\tilde{G})$ is strictly less than $m$.

By induction, the order of $\gamma_{\infty}(\tilde{G})$ is bounded in terms of $m$. Since $\gamma_{\infty}(\tilde{G}) = \gamma_{\infty}(G)/\left\langle H^{G}\right\rangle$ and $|\left\langle H^{G}\right\rangle|$ is bounded in terms of $m$, we conclude that the order of $\gamma_{\infty}(G)$ is bounded by a function depending only on $m$. This completes the proof.
    \end{proof} 

    We are now ready to embark on the proof of our main theorem. 

    \begin{proof}[Proof of Theorem \ref{main}]  Recall that $G$ is a finite group of odd order admitting an involutory automorphism $\varphi$ such that $G=[G,\varphi]$. Assume that $\gamma_{\infty}(G_{\varphi})$ has order $m$. We claim that $\gamma_{\infty}(G')$ has $m$-bounded order. In view of Lemma \ref{lema8}, we can pass to the quotient $G/Z_{\infty}(G')$ and assume without loss of generality that $Z(G')=1$.
    
We will proceed by induction on $m$. If $m=1$, then $G_{\varphi}$ is nilpotent. By a theorem of Asar \cite{asar1981}, $G'$ is nilpotent and the result follows immediately. Suppose that $m > 1$ and that the result holds whenever $\gamma_{\infty}(G_{\varphi})$ has order strictly less than $m$.

Assume additionally that every $\varphi$-invariant normal subgroup of $G$ has a nontrivial intersection with $\gamma_{\infty}(G_{\varphi})$.
Let $M$ be a minimal $\varphi$-invariant normal subgroup of $G$. Since the order of $\gamma_{\infty}(G_{\varphi}M/M)$ is strictly less than $m$, it follows by the induction hypothesis that the order of $\gamma_{\infty}(G')/M$ is $m$-bounded.

If there exist two distinct minimal $\varphi$-invariant normal subgroups in $G$, say $M_{1}$ and $M_{2}$, then the orders of $\gamma_{\infty}(G')/M_{1}$ and $\gamma_{\infty}(G')/M_{2}$ are $m$-bounded, which implies that the order of $\gamma_{\infty}(G')$ is $m$-bounded, completing the proof. Thus, we may assume that $M$ is a unique minimal $\varphi$-invariant normal subgroup of $G$. It is sufficient to show that the order of $M$ is $m$-bounded. 

By Lemma \ref{pre.lema7}, the semidirect product $\tilde{G} = M \rtimes G/C_{G}(M)$ satisfies the hypotheses of the theorem. Therefore we may assume that $M=C_{G}(M)$. Clearly, $M\leq Z(F(G))$ and we deduce that $M=F(G)$. Note that since $\gamma_{\infty}(G_{\varphi})$ has order at most $m$, the index $[G_\varphi:F(G_\varphi)]$ is $m$-bounded. By the result from \cite{shumyatsky2007} mentioned in the introduction, the index $[G':F(G')]$ is $m$-bounded, as well. 

Let $\overline{G} = G/M$. The previous paragraph shows that the order of $\overline{G}'$ is $m$-bounded. Observe that $C_{\overline{G}}(\overline{G}')$ is a nilpotent normal $p'$-subgroup of $\overline{G}$. By Lemma \ref{aut.cop}, there exists a nilpotent $\varphi$-invariant $p'$-subgroup $H$ of $G$ such that $\overline{H} = C_{\overline{G}}(\overline{G}')$. Hence, the index $[\overline{G}: \overline{H}]$ is $m$-bounded, which implies that $[G:MH]$ is $m$-bounded. Furthermore, $[H, G_{\varphi}] \le M$ because $G_{\varphi}\leq G'$ and $H$ centralizes $G'$ modulo $M$.

Set $U=O_{p,p'}(G)$. If $U_{\varphi}$ is a $p$-group, then $(U/M)_{\varphi}=1$ and so $U/M\leq Z(\overline{G})$. Since $U/M$ contains its centralizer in $\overline{G}$, we deduce that $U=G$ and so
$G_\varphi$ is a $p$-group, a contradiction since we assumed that $|\gamma_{\infty}(G_{\varphi})|>1$.

Thus, $U_{\varphi}$ is not a $p$-group. Let $L$ be a Hall $p'$-subgroup of $U_{\varphi}$. If $[M_{\varphi}, L]=1$, then, by Proposition \ref{93}, $L\leq O_{p'}(G)=1$, a contradiction. Therefore, there exists some $p'$-element $a \in U_{\varphi}$ such that $[M_{\varphi}, a] \neq 1$. By Lemma \ref{teo.baer}, $M_{\varphi}$ is not contained in $Z_{\infty}(G_{\varphi})$. 

Let $T = [M,a]$ and observe that $T$ is $H$-invariant. Since $M$ admits the decomposition $M = M_{-\varphi}M_{\varphi}$ under the action of $\varphi$, it follows that $T = [M_{-\varphi}, a][M_{\varphi},a]$. Consequently, $T_{\varphi} = [M_{\varphi}, a] \leq \gamma_{\infty}(G_{\varphi})$, showing that the order of $T_{\varphi}$ is at most $m$. Note further that the order of $H_{\varphi}$ is $m$-bounded, since it is equal to the order of $\overline{H}_{\varphi}$. Thus, the order of $(TH)_{\varphi}$ is $m$-bounded and, by  Theorem \ref{hm}, $TH$ contains a nilpotent normal subgroup $K$ of $m$-bounded index. 

Since $T$ is a $p$-group and the quotient $K/T$ is a $p'$-group, it follows that $T \le Z(K)$. Let $x \in T_{\varphi}$ be a nontrivial element. Since $MK \leq C_{G}(x)$, the number of conjugates of $x$ in $G$ is bounded by:
        \begin{equation*}
             [G:MK] \leq [G:MH][TH:K].
        \end{equation*}
Since both factors on the right-hand side are $m$-bounded, we conclude that the conjugacy class $|x^{G}|$ has $m$-bounded size.

 It can be easily checked that the order of the normal closure $\langle x^{G} \rangle$ is $m$-bounded. Indeed, since $x \in M$ and $M$ is elementary abelian, $\langle x^{G} \rangle$ is an elementary abelian $p$-group. Hence, its order is at most $p^{|x^{G}|}$. On the other hand, since $x$ is a nontrivial $p$-element from $\gamma_{\infty}(G_{\varphi})$, we have $p\leq m$. Therefore, $|\langle x^{G} \rangle| \leq m^{|x^{G}|}$. 
        
The subgroup $\langle x^{G} \rangle$ is normal and $\varphi$-invariant. By the minimality of $M$, we must have $M = \langle x^{G} \rangle$, proving that the order of $M$ is $m$-bounded. This completes the proof in the case where every $\varphi$-invariant normal subgroup of $G$ has a nontrivial intersection with $\gamma_{\infty}(G_{\varphi})$.

Now, suppose that there exists a nontrivial $\varphi$-invariant normal subgroup $N\leq G$ such that $N_{\varphi}\cap \gamma_{\infty}(G_{\varphi})=1$. We assume that $N$ is a maximal such subgroup. Then there are no such subgroups in $G/N$ and therefore, by the above, $\gamma_\infty(G')$ has $m$-bounded order modulo $N$. On the other hand, by Lemma \ref{lema7}, $N\cap G'\leq Z_\infty(G')$. Hence, $\gamma_\infty(G')$ has $m$-bounded order modulo $Z_\infty(G')$. Now an application of Lemma \ref{lema8} completes the proof.
    \end{proof}
    
\section{Declarations}

No datasets were generated or analysed during the current study.
\bigskip

Both authors contributed equally to the current study.
\bigskip

The first-named author thanks FAPDF and CNPq for the support during preparation of this paper.

\bibliographystyle{abbrv}
%\bibliography{bibli}

\begin{thebibliography}{b}

\bibitem{asar1981}
    A. O. Asar. \textit{Involutory automorphisms of groups of odd order.} Archiv der Mathematik, 36(1):97--103, 1981. 

\bibitem{baer1953}
    R. Baer. \textit{Group elements of prime power index.} Transactions of the American Mathematical Society, 75(1):20--47, 1953.

\bibitem{feitthompson}
    W. Feit and J. Thompson. \textit{Solvability of groups of odd order.} Pacific Journal of Mathematics, 13:775--1029, 1963.

\bibitem{gorenstein}
    D. Gorenstein. \textit{Finite Groups.} Harper and Row, New York, 1968.

\bibitem{hartleyandmeixner}
    B. Hartley and Th. Meixner. \textit{Periodic groups in which the centralizer of an involution has bounded order.} Journal of Algebra, 64(1):285--291, 1980.

\bibitem{hb}  B. Huppert, N. Blackburn. \textit{Finite Groups III.} Springer-Verlag, 1982.
 
\bibitem{khukhro}  E. I. Khukhro. \textit{Nilpotent Groups and Their Automorphisms.} De Gruyter expositions in mathematics, W. de Gruyter, 1993.
 
\bibitem{kurdachenko}
    L. A. Kurdachenko and I. Ya. Subbotin. \textit{On some properties of the upper and lower central series.} Southeast Asian Bulletin of Mathematics, 37(4):547--554, 2013.

\bibitem{rob1} D. J. S. Robinson, \textit{Finiteness conditions and generalized soluble groups}. Part 1, Springer-Verlag, 1972.

\bibitem{shumyatsky1993}
    P. Shumyatsky. \textit{Involutory automorphisms of locally soluble periodic groups.} Journal of Algebra, 155(1), 36--43, 1993.

\bibitem{ijac96}
    P. Shumyatsky. \textit{Involutory automorphisms of periodic groups.} International Journal of Algebra and Computation, 6, 745--749, 1996.

\bibitem{arch98}
    P. Shumyatsky. \textit{Involutory automorphisms of finite groups and their centralizers.} Arch. Math., 71, 425--432, 1998.

\bibitem{mona05}
    P. Shumyatsky. \textit{Involutory automorphisms of groups of odd order.} Monatsh. Math., 146, 77--82, 2005.

\bibitem{shumyatsky2007}
    P. Shumyatsky. \textit{Centralizers of involutory automorphisms of groups of odd order.} Journal of Algebra, 315(2):954--962, 2007.






    
\end{thebibliography}

\end{document}